\def\BibTeX{{\rm B\kern-.05em{\sc i\kern-.025em b}\kern-.08em
    T\kern-.1667em\lower.7ex\hbox{E}\kern-.125emX}}
\newtheorem{theorem}{Theorem}
\newtheorem{lemma}{Lemma}
\newtheorem{remark}{Remark}
\newcommand{\hess}{{\mathrm{Hess}}}
\newcommand{\complexi}{{\mathrm{i}}}
\newcommand{\blue}{\color{blue}}
\newcommand{\pushright}[1]{\ifmeasuring@#1\else\omit\hfill$\displaystyle#1$\fi\ignorespaces}
\newcommand{\dotminus}{\mathbin{\text{\@dotminus}}}
\newcommand{\@dotminus}{%
  \ooalign{\hidewidth\raise1ex\hbox{.}\hidewidth\cr$\m@th-$\cr}%
}
\title{\LARGE\textbf{
Control-affine Schr\"{o}dinger Bridge and Generalized Bohm Potential}
}
\author{Alexis M.H. Teter, Abhishek Halder, Michael D. Schneider, Alexx S. Perloff, Jane Pratt,\\Conor M. Artman, and Maria Demireva
\thanks{Teter is with the Department of Applied Mathematics, University of California Santa Cruz, CA 95064, USA, {\tt\footnotesize{amteter@ucsc.edu}}, and the Lawrence Livermore National Laboratory, Livermore, CA 94550, USA, {\tt\footnotesize{teter1@llnl.gov}}.%
}
\thanks{Halder is with the Department of Aerospace Engineering, Iowa State University, Ames, IA 50011, USA, {\tt\footnotesize{ahalder@iastate.edu}}.%
}
\thanks{Schneider, Perloff, Pratt, Artman, Demireva are with the Lawrence Livermore National Laboratory, Livermore, CA 94550, USA, {\tt\footnotesize{\{schneider42,perloff1,pratt34,artman1,demireva1\}\allowbreak @llnl.gov}}.%
}
\thanks{This work was performed under the auspices of the U.S. Department of Energy by Lawrence Livermore National Laboratory under Contract DE-AC52-07NA27344. Partial funding for this work was provided by LLNL Laboratory Directed Research and Development grant GS 25-ERD-044. Document release number: LLNL-JRNL-2008865.}
}
\begin{document}
\bstctlcite{IEEEexample:BSTcontrol}
\maketitle
\thispagestyle{empty}
\pagestyle{empty}

\begin{abstract}
From a stochastic control perspective, the Schr\"{o}dinger bridge is a density-valued continuous curve parametrized by time that connects a given pair of initial and terminal probability densities via minimum effort controlled Brownian motion. The control-affine Schr\"{o}dinger bridge extends this idea to a generic control-affine It\^{o} diffusion, possibly with an additive state cost. In this work, we recast the necessary conditions of optimality for the control-affine Schr\"{o}dinger bridge problem as a two point boundary value problem for a quantum mechanical Schr\"{o}dinger PDE with complex potential. This complex-valued potential is a generalization of the real-valued Bohm potential in quantum mechanics. Our derived potential is akin to the optical potential in nuclear physics where the real part of the potential encodes elastic scattering (transmission of wave function), and the imaginary part encodes inelastic scattering (absorption of wave function). The key takeaway is that the process noise that drives the evolution of probability densities induces an absorbing medium in the evolution of wave function. These results make new connections between control theory and non-equilibrium statistical mechanics through the lens of quantum mechanics.
\end{abstract}


\section{Introduction}\label{sec:introduction}
In 1931-32, Erwin Schr\"{o}dinger posed \cite{Sch31,Sch32} the question: what is the most likely probability density-valued continuous curve connecting two given probability density functions when the prior dynamics is Brownian motion? The diffusion process generating this curve is now known as the \emph{Schr\"{o}dinger bridge}, and has found widespread applications in stochastic control \cite{leonard2014survey,chen2021stochastic,caluya2021wasserstein,haddad2020prediction,nodozi2023neural,teter2025probabilistic} and generative AI \cite{de2021diffusion,shi2022conditional,liu2022deep,liu2023i2sb,xie2024bridging}.

In its original incarnation, the Schr\"{o}dinger bridge is a stochastic calculus of variations problem most naturally described through the theory of large deviations \cite{dembo2009large}, specifically by conditional Sanov's theorem \cite{csiszar1984sanov,dawson1990schrodinger,aebi1992large}. That the Schr\"{o}dinger bridge admits stochastic optimal control reformulation, was understood only in the late 20th century  \cite{follmer1988random,dai1991stochastic,pavon1991free,blaquiere1992controllability}.

This letter is similar in spirit to Schr\"{o}dinger's original motivation: to find points of contact between quantum mechanics and non-equilibrium statistical mechanics. In particular, we focus on a variant called the \emph{control-affine Schr\"{o}dinger bridge} (\texttt{caSB}) \cite{teter2025hopf} that 
concerns with the following stochastic optimal control problem over a given time horizon $[t_0,t_1]$:
\begin{subequations}
\begin{align}
&\underset{\left(\rho^{\bm{u}},\bm{u}\right)\in\mathcal{P}_{01}\times\mathcal{U}}{\arg\inf}\int_{t_0}^{t_1}\mathbb{E}_{\rho^{\bm{u}}}\left[q\left(t,\bm{x}_{t}^{\bm{u}}\right) + \frac{1}{2}\|\bm{u}\|_2^2\right]\differential t
\label{CASBPobjective}\\
&\text{subject to}\quad\partial_t\rho^{\bm{u}}
+ \nabla_{\bm{x}_t^{\bm{u}}}\cdot\left(\rho^{\bm{u}}\left(\bm{f}\left(t,\bm{x}_{t}^{\bm{u}}\right)+\bm{g}\left(t,\bm{x}_{t}^{\bm{u}}\right)\bm{u}\right)\right) \nonumber\\
&\hspace*{1in}= \frac{1}{2}\Delta_{\bm{\Sigma}\left(t,\bm{x}_{t}^{\bm{u}}\right)}\:\rho^{\bm{u}},\label{CASBPpdeconstraint}
\end{align}
\label{CASBP}
\end{subequations}
where $q(\cdot)$ is some bounded measurable state cost, $\mathcal{P}_{01} := \{t\mapsto \rho(t,\cdot)\:\text{continuous} \mid \rho \geq 0, \int \rho(t,\cdot) = 1\:\forall t\in[t_0,t_1], \rho(t=t_0,\cdot) = \rho_0(\cdot), \rho(t=t_1,\cdot) = \rho_1(\cdot)\}$, the weighted Laplacian\footnote{The case $\bm{\Sigma}=\bm{I}$ gives standard Laplacian $\Delta_{\bm{x}}:=\sum_{i,j=1}^{n}\partial^{2}_{x_{i}x_{j}}$.}
\begin{align}
\Delta_{\bm{\Sigma}(t,\bm{x})}\:\rho := \sum_{i,j=1}^{n}\partial^{2}_{x_{i}x_{j}}\left(\bm{\Sigma}_{ij}(t,\bm{x})\rho(t,\bm{x})\right),
\label{DefWeightedLaplacian}    
\end{align}
and $\nabla_{\bm{x}}$ denotes the standard Euclidean gradient w.r.t. vector $\bm{x}$. In particular, the composition $\nabla_{\bm{x}}\circ\nabla_{\bm{x}}=\hess_{\bm{x}}$, the Hessian. The inner product $\langle\nabla_{\bm{x}},\nabla_{\bm{x}}\rangle = \Delta_{\bm{x}}$, the standard Laplacian.

In \eqref{CASBP}, $\rho^{\bm{u}}$ is the probability density function (PDF) for the stochastic state $\bm{x}_t^{\bm{u}}\in\mathbb{R}^{n}$ that follows the control-affine It\^{o} stochastic differential equation:
\begin{align}
\differential\bm{x}_{t}^{\bm{u}} = \left(\bm{f}\left(t,\bm{x}_{t}^{\bm{u}}\right) + \bm{g}\left(t,\bm{x}_{t}^{\bm{u}}\right)\bm{u}\right)\differential t + \bm{\sigma}(t,\bm{x}_{t}^{\bm{u}})\differential\bm{w}_t,
\label{ControlledSDE}    
\end{align}
and the diffusion tensor $\bm{\Sigma}:=\bm{\sigma}\bm{\sigma}^{\top}\succeq\bm{0}$. In \eqref{ControlledSDE}, the $\bm{w}_t\in\mathbb{R}^{p}$ is standard Brownian motion, and the control $\bm{u}\in\mathcal{U}:=\{\bm{v}:[t_0,t_1]\times\mathbb{R}^{n}\mapsto\mathbb{R}^{m} \mid \|\bm{v}\|_2^2 < \infty\}$, the collection of Markovian finite energy inputs.

The given data for problem \eqref{CASBP} are: the time horizon $[t_0,t_1]$, the endpoint state PDFs $\rho_0,\rho_1$ having finite second moments, the bounded measurable state cost $q(\cdot)$, and the $\bm{f},\bm{g},\bm{\sigma}$ in \eqref{ControlledSDE} satisfying the standard assumptions:
\begin{itemize}
\item[\textbf{A1.}] $\exists c_1,c_2>0$ such that $\forall\bm{x},\bm{y}\in\mathbb{R}^n$, $\forall t\in[t_0,t_1]$, 
\begin{align*}
\|\bm{f}(t,\bm{x})\|_2 + \|\bm{\sigma}\left(t,\bm{x}\right)\|_{2} 
&\leq 
c_1\left(1 + \|\bm{x}\|_2\right), \\
\|\bm{f}(t,\bm{x}) - \bm{f}(t,\bm{y})\|_2 
&\leq 
c_2 \|\bm{x}-\bm{y}\|_2.    
\end{align*}
\item[\textbf{A2.}] $\exists c_3>0$ such that $\forall \bm{x}\in\mathbb{R}^{n}$, $\forall t\in[t_0,t_1]$,
\begin{align*}
    \langle \bm{x}, \bm{\Sigma}(t,\bm{x})\bm{x}\rangle 
    \geq 
    c_3 \|\bm{x}\|_2^2.
\end{align*}  
\end{itemize}
Solving \eqref{CASBP} amounts to minimizing an expected cost-to-go while transferring the controlled state between given endpoint state PDFs $\rho_0,\rho_1$ subject to the control-affine dynamics \eqref{ControlledSDE} and hard deadline constraints. Schr\"{o}dinger's original setting, referred simply as the Schr\"{o}dinger bridge (\texttt{SB}), is a special case of \eqref{CASBP}: $q\equiv 0$, $\bm{f}\equiv\bm{0}$, $\bm{g}=\bm{\sigma}=\bm{I}$.

From a control-theoretic perspective, the \texttt{caSB} is of more interest than the \texttt{SB}. This is because for control systems of practical interest, the unforced dynamics often have nontrivial prior drift $\bm{f}$ and diffusion coefficient $\bm{\sigma}$, as opposed to standard Brownian motion. Also, practical control systems have limited control authority encoded by the input coefficient $\bm{g}$. The state cost $q$ in \eqref{CASBPobjective} regularizes the optimally controlled sample paths for all $t\in[t_0,t_1]$ beyond minimum effort steering between the given endpoint statistics. Therefore, a better understanding of \eqref{CASBP} is needed.

Under the stated assumptions on the problem data (i.e., \textbf{A1}-\textbf{A2}, the PDFs $\rho_0,\rho_1$ have finite second moments, the state cost $q(\cdot)$ is bounded measurable), the existence-uniqueness for the solution of \eqref{CASBP} is guaranteed; see e.g., \cite{dawson1990schrodinger,leonard2011stochastic}. Let $S\in\mathcal{C}^{1,2}\left([t_0,t_1];\mathbb{R}^{n}\right)$ be the dual variable (value function) associated with the variational problem \eqref{CASBP}. Standard computation \cite[Thm. 1]{teter2025hopf} shows that the primal-dual pair $(\rho^{\bm{u}}_{\mathrm{opt}},S)$ for problem \eqref{CASBP} solves the following system of coupled nonlinear PDEs: 
\begin{subequations}
\begin{align}
&\text{(primal PDE)}\quad\partial_{t}\rho^{\bm{u}}_{\mathrm{opt}} + \nabla_{\bm{x}}\cdot\left(\rho^{\bm{u}}_{\mathrm{opt}}\left(\bm{f}+\bm{g}\bm{g}^{\top}\nabla_{\bm{x}}S\right)\right)\nonumber\\
&\hspace*{2in}= \frac{1}{2}\Delta_{\bm{\Sigma}}\:\rho^{\bm{u}}_{\mathrm{opt}},
\label{PrimalPDE}\\
&\text{(dual PDE)}\quad
\partial_{t} S +\langle\nabla_{\bm{x}}S,\bm{f}\rangle +\frac{1}{2}\langle\nabla_{\bm{x}}S,\bm{gg}^{\top}\nabla_{\bm{x}}S\rangle \nonumber\\
&\hspace*{1.5in}+\frac{1}{2}\langle\bm{\Sigma},\hess_{\bm{x}}S\rangle = q,\label{DualPDE}\\
&\text{(boundary conditions)}\quad \rho^{\bm{u}}_{\mathrm{opt}}\left(t=t_0,\cdot\right)=\rho_0(\cdot), \nonumber\\
&\hspace*{1.38in}\rho^{\bm{u}}_{\mathrm{opt}}\left(t=t_1,\cdot\right)=\rho_1(\cdot).\label{PrimalBC}
\end{align}
\label{FOCO}    
\end{subequations}
The optimal control is
\begin{align}
\bm{u}_{\mathrm{opt}} = \bm{g}^{\top}\nabla_{\bm{x}}S.
\label{OptimalControl}    
\end{align}
Notice that \eqref{PrimalPDE} is a controlled Fokker-Planck-Kolmogorov PDE in primal variable $\rho^{\bm{u}}_{\mathrm{opt}}$ while \eqref{DualPDE} is a Hamilton-Jacobi-Bellman (HJB) PDE in dual variable $S$. 

Using the Hopf-Cole transform $\left(\rho^{\bm{u}}_{\mathrm{opt}},S\right)\mapsto\left(\varphi,\widehat{\varphi}\right):= \left(\rho^{\bm{u}}_{\mathrm{opt}}\exp\left(-S/\lambda\right),\exp\left(S/\lambda\right)\right)$, $\lambda>0$, the system \eqref{FOCO} can be transformed \cite[Sec. IV-C]{teter2025hopf} into a system of PDEs in real-valued function pair $\left(\varphi,\widehat{\varphi}\right)$ such that $\rho^{\bm{u}}_{\mathrm{opt}}(t,\cdot) = \varphi(t,\cdot)\widehat{\varphi}(t,\cdot)$. This system is amenable for contractive fixed point recursions, facilitating a numerical solution for \eqref{FOCO}. 

While the focus of \cite{teter2025hopf} was on understanding the benefits of the Hopf-Cole transform to \eqref{FOCO}, here we apply a different transform on \eqref{FOCO} to transcribe it to a quantum mechanical Schr\"{o}dinger PDE boundary value problem (BVP) in unknown wave function. Specifically, let $\complexi:=\sqrt{-1}$, use the superscript $^{\dagger}$ to denote complex conjugate, and let
\begin{align}
R := \frac{1}{2}\log\rho^{\bm{u}}_{\mathrm{opt}}.
\label{defR}
\end{align}
For fixed $\lambda>0$, we apply the Madelung transform $\left(\rho^{\bm{u}}_{\mathrm{opt}},S\right)\mapsto\left(\psi,\psi^{\dagger}\right)$, equivalently $\left(R,S\right)\mapsto\left(\psi,\psi^{\dagger}\right)$ \cite[Sec. 7]{nagasawa1989transformations}, given by
\begin{subequations}
\begin{align}
\psi &:= \exp\left(R+\frac{\complexi}{\lambda}S\right), \label{defpsi}\\
\psi^{\dagger} &:= \exp\left(R-\frac{\complexi}{\lambda}S\right). \label{defpsidagger}
\end{align}
\label{NagasawaTransform}
\end{subequations}
We refer to $\psi$ as the \emph{wave function}, and $\psi^{\dagger}$ as the \emph{conjugate wave function}. Note that \eqref{defR}-\eqref{NagasawaTransform} imply \emph{Born's relation} \cite{born1926quantenmechanik}:
\begin{align}
\rho^{\bm{u}}_{\mathrm{opt}}(t,\cdot) = \psi(t,\cdot)\psi^{\dagger}(t,\cdot) \quad\forall t\in[t_0,t_1],
\label{psitimespsidagger}\end{align}
    giving a complex-valued factorization of $\rho^{\bm{u}}_{\mathrm{opt}}$. Because of complex conjugacy, it suffices to derive a single PDE BVP for the transformed variable $\psi(t,\cdot)$, unlike the case for the Hopf-Cole transform. 

At our level of generality (i.e., for the \texttt{caSB}) it is not immediately clear whether the transformed PDE BVP can be related to the quantum mechanical Schr\"{o}dinger PDE. Even if this is possible, it is unclear what the structure would be for the corresponding quantum potential.  

\subsubsection*{Contributions}
\begin{itemize}
    \item We prove that the aforementioned transformed PDE BVP is in the form
    \begin{subequations}
\begin{align}
&\complexi\lambda\partial_{t}\psi = -\dfrac{\lambda^2}{2}\Delta_{\bm{\Sigma}}\psi + V_{\texttt{caSB}}\psi,\label{WaveFunctionPDEgeneralized}\\
&\psi(t_{0},\bm{x})\psi^{\dagger}(t_{0},\bm{x}) = \rho_{0}, \: \psi(t_{1},\bm{x})\psi^{\dagger}(t_{1},\bm{x}) = \rho_{1},
\label{WaveFunctionBCgeneralized}
\end{align}
\label{WaveFunctionBVPgeneralized}
\end{subequations}
where the quantum potential $V_{\texttt{caSB}}$ is complex!

\item We show that in the case of \texttt{SB}, \eqref{WaveFunctionPDEgeneralized} reduces to the more familiar Schr\"{o}dinger PDE form:
\begin{align}
\complexi\partial_{t}\psi = -\dfrac{1}{2}\Delta_{\bm{x}}\psi + V_{\texttt{SB}}\psi,\label{WaveFunctionPDEclassical}
\end{align}
with \eqref{WaveFunctionBCgeneralized} unchanged, where the potential $V_{\texttt{SB}}$ is still complex. The derived $V_{\texttt{SB}}$ is a considerable generalization of the Bohm potential \cite{bohm1952suggested,bohm1954model}. We explain why the imaginary part of $V_{\texttt{SB}}$ cannot vanish, and that it implies an effective absorption for the wave function. In this sense, our results can be seen as a stochastic control-theoretic generalization of the deterministic de Broglie–Bohm theory.

\item The conceptual significance of \eqref{WaveFunctionBVPgeneralized} is to establish an equivalence between optimal density steering and wave steering for an important class of stochastic control systems given by \eqref{ControlledSDE}. This should be of broad interest. We stress here that the potentials $V_{\texttt{caSB}}$ in \eqref{WaveFunctionPDEgeneralized} and $V_{\texttt{SB}}$ in \eqref{WaveFunctionPDEclassical}, depend on $\psi$ via $R,S$. So \eqref{WaveFunctionPDEgeneralized} and \eqref{WaveFunctionPDEclassical} are nonlinear Schr\"{o}dinger PDEs. 


\end{itemize}

\subsubsection*{Related Works}
The connections between the Schr\"{o}dinger bridge and the Schr\"{o}dinger PDE in quantum mechanics were explored by Nagasawa \cite[Sec. 7]{nagasawa1989transformations}, where a Schr\"{o}dinger process was derived from the Schr\"{o}dinger PDE. Similar connections were pursued by Guerra and Morato \cite{guerra1983quantization}. Also starting from the Schr\"{o}dinger PDE, Ohsumi derived \cite{ohsumi2019interpretation} a stochastic control problem in the spirit of inverse optimal control. However, the resulting formulation is not a Schr\"{o}dinger bridge. More broadly, several works \cite{nelson1966derivation,yasue1981quantum,rosenbrock2002doing,teter2024weyl} have discussed connections between non-quantum stochastic control and quantum mechanics.

In contrast to the existing works, our developments in this letter start from the generic control-affine Schr\"{o}dinger bridge--a concrete stochastic optimal control problem--and from there, derive suitable versions of the Schr\"{o}dinger PDE.

We clarify here that our results concern with transforming the non-quantum Schr\"{o}dinger bridge (as in classical Markov diffusion process) to the Schr\"{o}dinger PDE. We do not study the quantum Schr\"{o}dinger bridge \cite{pavon2002quantum,pavon2010schrodinger}.



\section{Main Results}\label{sec:MainResults}

\subsection{Dynamics of the wave function for \texttt{caSB}}\label{subsec:psiCASB}
We start with the following Lemma.
\begin{lemma}[Weighted Laplacian of $R$]\label{Lemma:FromWeightedLaplacianOfRhoToWeightedLaplacianOfR}
For $R$ defined as in \eqref{defR}, we have
\begin{align}
\frac{1}{4\rho^{\bm{u}}_{\mathrm{opt}}} \Delta_{\bm{\Sigma}}\rho^{\bm{u}}_{\mathrm{opt}} &= \frac{1}{2}\Delta_{\bm{\Sigma}} R + \left(\nabla_{\bm{x}}R\right)^{\top}\bm{\Sigma}\nabla_{\bm{x}}R \nonumber \\ & \hspace{1cm}+ \left(\frac{1}{4}-\frac{1}{2}R\right)\langle\hess_{\bm{x}},\bm{\Sigma}\rangle.
\label{FromWeightedLaplacianOfRhoToWeightedLaplacianOfR}    
\end{align}    
\end{lemma}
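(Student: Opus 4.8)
The plan is to prove \eqref{FromWeightedLaplacianOfRhoToWeightedLaplacianOfR} by a direct second-order product-rule computation, using the exponential relation $\rho^{\bm{u}}_{\mathrm{opt}} = \exp(2R)$ implied by \eqref{defR} together with the symmetry $\bm{\Sigma}=\bm{\sigma}\bm{\sigma}^{\top}=\bm{\Sigma}^{\top}$. I abbreviate $\rho:=\rho^{\bm{u}}_{\mathrm{opt}}>0$ and $\partial_{i}:=\partial_{x_{i}}$. First I would establish a generic expansion: applying the Leibniz rule twice to \eqref{DefWeightedLaplacian}, for any smooth scalar $\phi$,
\begin{align*}
\Delta_{\bm{\Sigma}}\phi
&= \sum_{i,j}\partial_{i}\partial_{j}\left(\bm{\Sigma}_{ij}\phi\right)\\
&= \phi\,\langle\hess_{\bm{x}},\bm{\Sigma}\rangle
+ 2\sum_{i,j}\left(\partial_{i}\bm{\Sigma}_{ij}\right)\left(\partial_{j}\phi\right)
+ \sum_{i,j}\bm{\Sigma}_{ij}\,\partial_{i}\partial_{j}\phi ,
\end{align*}
where $\langle\hess_{\bm{x}},\bm{\Sigma}\rangle:=\sum_{i,j}\partial_{i}\partial_{j}\bm{\Sigma}_{ij}$ is the Frobenius pairing of the Hessian operator with $\bm{\Sigma}$, and the factor $2$ arises by relabeling $i\leftrightarrow j$ in one of the two mixed terms and invoking $\bm{\Sigma}_{ij}=\bm{\Sigma}_{ji}$.

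Next I would specialize this identity to $\phi=\rho$ and to $\phi=R$. For $\phi=\rho=e^{2R}$ I substitute $\partial_{j}\rho=2\rho\,\partial_{j}R$ and $\partial_{i}\partial_{j}\rho=\rho\left(4\,\partial_{i}R\,\partial_{j}R+2\,\partial_{i}\partial_{j}R\right)$ and divide through by $4\rho$, which produces
\begin{align*}
\frac{1}{4\rho}\Delta_{\bm{\Sigma}}\rho
&= \tfrac{1}{4}\langle\hess_{\bm{x}},\bm{\Sigma}\rangle
+ \sum_{i,j}\left(\partial_{i}\bm{\Sigma}_{ij}\right)\left(\partial_{j}R\right)\\
&\quad
+ \left(\nabla_{\bm{x}}R\right)^{\top}\bm{\Sigma}\,\nabla_{\bm{x}}R
+ \tfrac{1}{2}\sum_{i,j}\bm{\Sigma}_{ij}\,\partial_{i}\partial_{j}R ,
\end{align*}
where I used $\sum_{i,j}\bm{\Sigma}_{ij}\,\partial_{i}R\,\partial_{j}R=\left(\nabla_{\bm{x}}R\right)^{\top}\bm{\Sigma}\,\nabla_{\bm{x}}R$. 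For $\phi=R$ the generic identity is already in final form, giving $\tfrac12\Delta_{\bm{\Sigma}}R=\tfrac12 R\langle\hess_{\bm{x}},\bm{\Sigma}\rangle+\sum_{i,j}(\partial_{i}\bm{\Sigma}_{ij})(\partial_{j}R)+\tfrac12\sum_{i,j}\bm{\Sigma}_{ij}\partial_{i}\partial_{j}R$.

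Finally I would subtract $\tfrac12\Delta_{\bm{\Sigma}}R$ from the expression for $\tfrac{1}{4\rho}\Delta_{\bm{\Sigma}}\rho$: the cross term $\sum_{i,j}(\partial_{i}\bm{\Sigma}_{ij})(\partial_{j}R)$ and the term $\tfrac12\sum_{i,j}\bm{\Sigma}_{ij}\partial_{i}\partial_{j}R$ cancel, leaving exactly $\left(\tfrac14-\tfrac12 R\right)\langle\hess_{\bm{x}},\bm{\Sigma}\rangle+\left(\nabla_{\bm{x}}R\right)^{\top}\bm{\Sigma}\,\nabla_{\bm{x}}R$, which rearranges to \eqref{FromWeightedLaplacianOfRhoToWeightedLaplacianOfR}. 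The argument is entirely mechanical, so there is no genuine obstacle; the only points demanding care are the symmetry step that merges the two mixed derivative terms (so the middle coefficient is $2$ rather than two separate sums) and being consistent about the convention for $\langle\hess_{\bm{x}},\bm{\Sigma}\rangle$, which here pairs the Hessian operator with the entries of $\bm{\Sigma}$, in contrast to $\langle\bm{\Sigma},\hess_{\bm{x}}S\rangle$ in \eqref{DualPDE} where $\hess_{\bm{x}}$ acts on the scalar $S$. Positivity $\rho>0$ and $\rho\in\mathcal{C}^{1,2}$ are all the regularity that is used.
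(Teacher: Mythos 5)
Your proof is correct and follows essentially the same route as the paper's: both rest on the product-rule expansion of $\Delta_{\bm{\Sigma}}$ (which you derive from scratch, while the paper cites it as Lemma~1, eq.~(16) of \cite{teter2025hopf}) applied to both $\rho^{\bm{u}}_{\mathrm{opt}}$ and $R$, combined with the chain-rule relations coming from $\rho^{\bm{u}}_{\mathrm{opt}}=e^{2R}$. The only cosmetic difference is the direction of substitution: you push $\partial_i\partial_j\rho^{\bm{u}}_{\mathrm{opt}}$ directly into $R$-terms and subtract, whereas the paper first records $\tfrac{1}{4\rho^{\bm{u}}_{\mathrm{opt}}}\hess_{\bm{x}}\rho^{\bm{u}}_{\mathrm{opt}}$ in terms of $\hess_{\bm{x}}R$ and then substitutes back; both are the same algebra.
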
 
\begin{proof}
Using \cite[Lemma 1, eq. (16)]{teter2025hopf}, we have\footnote{The divergence of a matrix field $\bm{\Sigma}$, denoted as $\nabla_{\bm{x}}\cdot\bm{\Sigma}$, is understood as a vector with elements
$\left(\nabla_{\bm{x}}\cdot\bm{\Sigma}\right)_{i} := \displaystyle\sum_{j=1}^{n}\dfrac{\partial\Sigma_{ij}}{\partial x_{j}} \quad\forall i\in\{1,\hdots,n\}$.}
\begin{align}
\Delta_{\bm{\Sigma}} R &= R\langle\hess_{\bm{x}},\bm{\Sigma}\rangle + \langle\bm{\Sigma},\hess_{\bm{x}}R\rangle + 2\langle\nabla_{\bm{x}}\cdot\bm{\Sigma},\nabla_{\bm{x}}R\rangle\nonumber\\
\Rightarrow\quad& \frac{1}{2}\langle\bm{\Sigma},\hess_{\bm{x}}R\rangle + \langle\nabla_{\bm{x}}\cdot\bm{\Sigma},\nabla_{\bm{x}}R\rangle \nonumber \\ & \hspace{2cm} = \frac{1}{2}\Delta_{\bm{\Sigma}} R - \frac{1}{2}R\langle\hess_{\bm{x}},\bm{\Sigma}\rangle.
\label{WeightedLaplacianRexpansion}
\end{align}
By the same \cite[Lemma 1, eq. (16)]{teter2025hopf}, we also have
\begin{align}
\frac{1}{4\rho^{\bm{u}}_{\mathrm{opt}}}\Delta_{\bm{\Sigma}} \rho^{\bm{u}}_{\mathrm{opt}} &= \frac{1}{4}\langle\hess_{\bm{x}},\bm{\Sigma}\rangle + \frac{1}{4\rho^{\bm{u}}_{\mathrm{opt}}}\langle\bm{\Sigma},\hess_{\bm{x}}\rho^{\bm{u}}_{\mathrm{opt}}\rangle \nonumber \\ & \hspace{1cm} + \frac{1}{2\rho^{\bm{u}}_{\mathrm{opt}}}\langle\nabla_{\bm{x}}\cdot\bm{\Sigma},\nabla_{\bm{x}}\rho^{\bm{u}}_{\mathrm{opt}}\rangle.
\label{WeightedLaplacianRhoexpansion}
\end{align}
To express the RHS of \eqref{WeightedLaplacianRhoexpansion} in terms of $R$, notice that the first summand in this RHS is independent of $R$, and thanks to \eqref{defR}, the last summand is $\langle\nabla_{\bm{x}}\cdot\bm{\Sigma},\nabla_{\bm{x}}R\rangle$. For the middle summand in the RHS of \eqref{WeightedLaplacianRhoexpansion}, we find
\begin{align*}
\hess_{\bm{x}}R &= \hess_{\bm{x}}\left(\frac{1}{2}\log\rho^{\bm{u}}_{\mathrm{opt}}\right) \nonumber\\
&= -\dfrac{1}{2}\left(\frac{\nabla_{\bm{x}}\rho^{\bm{u}}_{\mathrm{opt}}}{\rho^{\bm{u}}_{\mathrm{opt}}}\right)\left(\frac{\nabla_{\bm{x}}\rho^{\bm{u}}_{\mathrm{opt}}}{\rho^{\bm{u}}_{\mathrm{opt}}}\right)^{\top} \!\!\!+ \dfrac{1}{2\rho^{\bm{u}}_{\mathrm{opt}}}\hess_{\bm{x}}\rho^{\bm{u}}_{\mathrm{opt}}\nonumber\\
&= -\dfrac{1}{2}\left(2\nabla_{\bm{x}}R\right)\left(2\nabla_{\bm{x}}R\right)^{\top} \!\!+ \dfrac{1}{2\rho^{\bm{u}}_{\mathrm{opt}}}\hess_{\bm{x}}\rho^{\bm{u}}_{\mathrm{opt}},
\end{align*}
which yields
\begin{align}
\dfrac{1}{4\rho^{\bm{u}}_{\mathrm{opt}}}\hess_{\bm{x}}\rho^{\bm{u}}_{\mathrm{opt}} &= \left(\nabla_{\bm{x}}R\right)\left(\nabla_{\bm{x}}R\right)^{\top} + \frac{1}{2}\hess_{\bm{x}}R.
\label{HessOfR}    
\end{align}
Hence, we can rewrite \eqref{WeightedLaplacianRhoexpansion} as
\begin{align}
\frac{1}{4\rho^{\bm{u}}_{\mathrm{opt}}}\Delta_{\bm{\Sigma}} \rho^{\bm{u}}_{\mathrm{opt}} &= \frac{1}{4}\langle\hess_{\bm{x}},\bm{\Sigma}\rangle + \left(\nabla_{\bm{x}}R\right)^{\top}\bm{\Sigma}\left(\nabla_{\bm{x}}R\right) \nonumber \\ & \hspace{1cm} + \frac{1}{2}\langle\bm{\Sigma},\hess_{\bm{x}}R\rangle + \langle\nabla_{\bm{x}}\cdot\bm{\Sigma},\nabla_{\bm{x}}R\rangle\nonumber\\
&= \frac{1}{4}\langle\hess_{\bm{x}},\bm{\Sigma}\rangle + \left(\nabla_{\bm{x}}R\right)^{\top}\bm{\Sigma}\left(\nabla_{\bm{x}}R\right) \nonumber \\& \hspace{1cm} + \frac{1}{2}\Delta_{\bm{\Sigma}} R - \frac{1}{2}R\langle\hess_{\bm{x}},\bm{\Sigma}\rangle,
\label{WeightedLaplacianRhoexpansionSimplified}
\end{align}
using \eqref{WeightedLaplacianRexpansion}. Grouping the first and the last summand in the RHS of \eqref{WeightedLaplacianRhoexpansionSimplified}, we obtain \eqref{FromWeightedLaplacianOfRhoToWeightedLaplacianOfR}.
\end{proof}

Using Lemma \ref{Lemma:FromWeightedLaplacianOfRhoToWeightedLaplacianOfR}, we next derive \eqref{WaveFunctionBVPgeneralized}.

\begin{theorem}\label{Thm:caSB}
Let $\psi$ be as in \eqref{defpsi}. Then \eqref{FOCO} is equivalent to \eqref{WaveFunctionBVPgeneralized} with complex potential $V_{\texttt{caSB}}$ having real ($\Re$) and imaginary ($\Im$) parts
\begin{align}
&\Re\left(V_{\texttt{caSB}}\right) = \dfrac{\lambda^2}{2}\langle\hess_{\bm{x}},\bm{\Sigma}\rangle +\dfrac{\lambda^2}{2}\langle\bm{\Sigma},\hess_{\bm{x}}R\rangle + \langle\nabla_{\bm{x}}S,\bm{f}\rangle\nonumber \\ &\hspace{0.5cm}+\dfrac{\lambda^2}{2}\|\nabla_{\bm{x}}R\|_{\bm{\Sigma}}^2 -\dfrac{1}{2}\|\nabla_{\bm{x}}S\|_{\bm{\Sigma}}^2+\lambda^2\langle\nabla_{\bm{x}}\cdot\bm{\Sigma},\nabla_{\bm{x}}R\rangle \nonumber\\
&\hspace{0.5cm} +\!\frac{1}{2}\langle\nabla_{\bm{x}}S,\bm{gg}^{\top}\nabla_{\bm{x}}S\rangle \!+\frac{1}{2}\langle\bm{\Sigma},\hess_{\bm{x}}S\rangle\! - q, \label{Re_V}\\
&\Im\left(V_{\texttt{caSB}}\right) = \lambda\bigg\{\dfrac{1}{2}\langle\bm{\Sigma},\hess_{\bm{x}}S\rangle + \left(\nabla_{\bm{x}}R\right)^{\top}\bm{\Sigma}\left(\nabla_{\bm{x}}S\right) \nonumber \\ &\hspace{0.5cm}+ \langle\nabla_{\bm{x}}\cdot\bm{\Sigma},\nabla_{\bm{x}}S\rangle -\langle\nabla_{\bm{x}}R,\bm{f}+\bm{gg}^{\top}\nabla_{\bm{x}}S\rangle\nonumber\\
&\hspace{0.5cm}-\frac{1}{2}\nabla_{\bm{x}}\cdot\left(\bm{f}+\bm{gg}^{\top}\nabla_{\bm{x}}S\right) + \frac{1}{2}\Delta_{\bm{\Sigma}} R \nonumber \\
&\hspace{0.5cm}+ \left(\nabla_{\bm{x}}R\right)^{\top}\bm{\Sigma}\nabla_{\bm{x}}R + \left(\frac{1}{4}-\frac{1}{2}R\right)\langle\hess_{\bm{x}},\bm{\Sigma}\rangle\bigg\},
\label{Im_V}
\end{align}
where the squared weighted norm $\|\cdot\|_{\bm{\Sigma}}^2 := \left(\cdot\right)^{\top}\bm{\Sigma}(\cdot)$.
\end{theorem}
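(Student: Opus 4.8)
The plan is a direct substitution of the Madelung transform \eqref{NagasawaTransform} into the candidate PDE \eqref{WaveFunctionPDEgeneralized}, followed by a split into real and imaginary parts and elimination of the time derivatives via \eqref{FOCO}. The boundary conditions \eqref{WaveFunctionBCgeneralized} are immediate from Born's relation \eqref{psitimespsidagger} together with \eqref{PrimalBC}, so all the work is in the bulk PDE.

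First I would write $\psi=\exp(\phi)$ with $\phi:=R+\tfrac{\complexi}{\lambda}S$, so that $\partial_t\psi=(\partial_t\phi)\psi$, $\nabla_{\bm{x}}\psi=(\nabla_{\bm{x}}\phi)\psi$, and $\hess_{\bm{x}}\psi=\big((\nabla_{\bm{x}}\phi)(\nabla_{\bm{x}}\phi)^{\top}+\hess_{\bm{x}}\phi\big)\psi$. Applying the product-rule identity for $\Delta_{\bm{\Sigma}}$ used in the proof of Lemma~\ref{Lemma:FromWeightedLaplacianOfRhoToWeightedLaplacianOfR} (namely \cite[Lemma 1, eq. (16)]{teter2025hopf}) to the scalar field $\psi$ gives
\begin{align*}
\Delta_{\bm{\Sigma}}\psi=\psi\Big(\langle\hess_{\bm{x}},\bm{\Sigma}\rangle+(\nabla_{\bm{x}}\phi)^{\top}\bm{\Sigma}(\nabla_{\bm{x}}\phi)+\langle\bm{\Sigma},\hess_{\bm{x}}\phi\rangle+2\langle\nabla_{\bm{x}}\cdot\bm{\Sigma},\nabla_{\bm{x}}\phi\rangle\Big).
\end{align*}
Since $\rho^{\bm{u}}_{\mathrm{opt}}>0$, $\psi$ is nowhere zero, so I may divide \eqref{WaveFunctionPDEgeneralized} by $\psi$. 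Using $\complexi\lambda\,\partial_t\phi=\complexi\lambda\,\partial_t R-\partial_t S$ and the expansion $(\nabla_{\bm{x}}\phi)^{\top}\bm{\Sigma}(\nabla_{\bm{x}}\phi)=\|\nabla_{\bm{x}}R\|_{\bm{\Sigma}}^{2}-\tfrac{1}{\lambda^{2}}\|\nabla_{\bm{x}}S\|_{\bm{\Sigma}}^{2}+\tfrac{2\complexi}{\lambda}(\nabla_{\bm{x}}R)^{\top}\bm{\Sigma}(\nabla_{\bm{x}}S)$ (and the analogous splits of $\langle\bm{\Sigma},\hess_{\bm{x}}\phi\rangle$ and $\langle\nabla_{\bm{x}}\cdot\bm{\Sigma},\nabla_{\bm{x}}\phi\rangle$ into $R$- and $S$-pieces), the identity for $V_{\texttt{caSB}}$ separates into $\Re(V_{\texttt{caSB}})=-\partial_t S+\tfrac{\lambda^{2}}{2}\langle\hess_{\bm{x}},\bm{\Sigma}\rangle+\tfrac{\lambda^{2}}{2}\langle\bm{\Sigma},\hess_{\bm{x}}R\rangle+\tfrac{\lambda^{2}}{2}\|\nabla_{\bm{x}}R\|_{\bm{\Sigma}}^{2}-\tfrac12\|\nabla_{\bm{x}}S\|_{\bm{\Sigma}}^{2}+\lambda^{2}\langle\nabla_{\bm{x}}\cdot\bm{\Sigma},\nabla_{\bm{x}}R\rangle$ and $\Im(V_{\texttt{caSB}})=\lambda\big(\partial_t R+\tfrac12\langle\bm{\Sigma},\hess_{\bm{x}}S\rangle+(\nabla_{\bm{x}}R)^{\top}\bm{\Sigma}(\nabla_{\bm{x}}S)+\langle\nabla_{\bm{x}}\cdot\bm{\Sigma},\nabla_{\bm{x}}S\rangle\big)$.

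The last step eliminates the time derivatives using \eqref{FOCO}. For the real part, \eqref{DualPDE} gives $-\partial_t S=-q+\langle\nabla_{\bm{x}}S,\bm{f}\rangle+\tfrac12\langle\nabla_{\bm{x}}S,\bm{g}\bm{g}^{\top}\nabla_{\bm{x}}S\rangle+\tfrac12\langle\bm{\Sigma},\hess_{\bm{x}}S\rangle$, and inserting this reproduces \eqref{Re_V} term-by-term. For the imaginary part, $\partial_t R=\tfrac{1}{2\rho^{\bm{u}}_{\mathrm{opt}}}\partial_t\rho^{\bm{u}}_{\mathrm{opt}}$ by \eqref{defR}, so \eqref{PrimalPDE} together with $\nabla_{\bm{x}}R=\tfrac12\nabla_{\bm{x}}\rho^{\bm{u}}_{\mathrm{opt}}/\rho^{\bm{u}}_{\mathrm{opt}}$ yields $\partial_t R=-\langle\nabla_{\bm{x}}R,\bm{f}+\bm{g}\bm{g}^{\top}\nabla_{\bm{x}}S\rangle-\tfrac12\nabla_{\bm{x}}\cdot(\bm{f}+\bm{g}\bm{g}^{\top}\nabla_{\bm{x}}S)+\tfrac{1}{4\rho^{\bm{u}}_{\mathrm{opt}}}\Delta_{\bm{\Sigma}}\rho^{\bm{u}}_{\mathrm{opt}}$; replacing the last term by the right-hand side of \eqref{FromWeightedLaplacianOfRhoToWeightedLaplacianOfR} (Lemma~\ref{Lemma:FromWeightedLaplacianOfRhoToWeightedLaplacianOfR}) reproduces \eqref{Im_V}. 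The converse direction is automatic: since $V_{\texttt{caSB}}$ is itself built from $R$ and $S$, splitting \eqref{WaveFunctionPDEgeneralized} into real and imaginary parts and undoing the two substitutions returns \eqref{DualPDE} and \eqref{PrimalPDE}, while \eqref{WaveFunctionBCgeneralized} returns \eqref{PrimalBC} via \eqref{psitimespsidagger}.

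The only delicate point is the bookkeeping in the $\Delta_{\bm{\Sigma}}$ expansion: keeping track of the index-heavy terms $\langle\hess_{\bm{x}},\bm{\Sigma}\rangle$ (the double divergence of $\bm{\Sigma}$) and $\langle\nabla_{\bm{x}}\cdot\bm{\Sigma},\nabla_{\bm{x}}(\cdot)\rangle$, the cross term $(\nabla_{\bm{x}}R)^{\top}\bm{\Sigma}(\nabla_{\bm{x}}S)$ that couples the real and imaginary channels, and the powers of $\lambda$ produced by $\tfrac{\complexi}{\lambda}S$ — the $S$-only terms in $\Re(V_{\texttt{caSB}})$ end up with weight $1$, the diffusion-type terms with weight $\lambda^{2}$, and all of $\Im(V_{\texttt{caSB}})$ with an overall $\lambda$. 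Everything else is substitution of the two PDEs of \eqref{FOCO} and a single application of Lemma~\ref{Lemma:FromWeightedLaplacianOfRhoToWeightedLaplacianOfR}.
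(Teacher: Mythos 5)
Your proof is correct and uses the same essential ingredients as the paper's: the Madelung identification $\log\psi=R+\tfrac{\complexi}{\lambda}S$, the product rule for $\Delta_{\bm{\Sigma}}$ from \cite[Lemma 1]{teter2025hopf}, the primal and dual PDEs of \eqref{FOCO} to eliminate $\partial_t R$ and $\partial_t S$, and Lemma~\ref{Lemma:FromWeightedLaplacianOfRhoToWeightedLaplacianOfR} for the $\tfrac{1}{4\rho^{\bm{u}}_{\mathrm{opt}}}\Delta_{\bm{\Sigma}}\rho^{\bm{u}}_{\mathrm{opt}}$ term. The only difference is organizational: you divide \eqref{WaveFunctionPDEgeneralized} through by $\psi$ and solve directly for $V_{\texttt{caSB}}$ before substituting, whereas the paper builds an expression for $\complexi\lambda\partial_t\psi$ and adds a separately derived ``zero identity'' \eqref{WeightedLaplacianRHSzero} --- which is exactly the $\Delta_{\bm{\Sigma}}\psi$ expansion you use --- so the computations coincide.
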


\begin{proof}
To derive a PDE for $\psi$, we combine Lemma \ref{Lemma:FromWeightedLaplacianOfRhoToWeightedLaplacianOfR} with the primal-dual PDEs \eqref{PrimalPDE}-\eqref{DualPDE}, to obtain
\begin{align}
&\dfrac{1}{\psi}\partial_{t}\psi = \partial_{t}R + \dfrac{\complexi}{\lambda}\partial_{t}S\nonumber\\
&= \dfrac{1}{2\rho^{\bm{u}}_{\mathrm{opt}}}\left(-\nabla_{\bm{x}}\cdot\left(\rho^{\bm{u}}_{\mathrm{opt}}\left(\bm{f}+\bm{gg}^{\top}\nabla_{\bm{x}}S\right)\right) + \dfrac{1}{2}\Delta_{\bm{x}}\rho^{\bm{u}}_{\mathrm{opt}}\right) \nonumber\\ 
&+ \dfrac{\complexi}{\lambda}\!\left(\!\!-\langle\nabla_{\bm{x}}S,\bm{f}\rangle -\frac{1}{2}\langle\nabla_{\bm{x}}S,\bm{gg}^{\top}\nabla_{\bm{x}}S\rangle \nonumber \right. \\ & \left. \hspace{4cm}-\frac{1}{2}\langle\bm{\Sigma},\hess_{\bm{x}}S\rangle + q \right)\nonumber\\
&= -\langle\nabla_{\bm{x}}R,\bm{f}+\bm{gg}^{\top}\nabla_{\bm{x}}S\rangle -\frac{1}{2}\nabla_{\bm{x}}\cdot\left(\bm{f}+\bm{gg}^{\top}\nabla_{\bm{x}}S\right) \nonumber \\ &+ \frac{1}{2}\Delta_{\bm{\Sigma}} R + \left(\nabla_{\bm{x}}R\right)^{\top}\bm{\Sigma}\nabla_{\bm{x}}R + \left(\frac{1}{4}-\frac{1}{2}R\right)\langle\hess_{\bm{x}},\bm{\Sigma}\rangle \nonumber\\
&+ \dfrac{\complexi}{\lambda}\!\left(-\langle\nabla_{\bm{x}}S,\bm{f}\rangle -\frac{1}{2}\langle\nabla_{\bm{x}}S,\bm{gg}^{\top}\nabla_{\bm{x}}S\rangle \right. \nonumber\\ 
&\hspace{4cm}\left.-\frac{1}{2}\langle\bm{\Sigma},\hess_{\bm{x}}S\rangle + q\right).
\label{partiallogpsipartialtgeneralized}
\end{align}
We re-write \eqref{partiallogpsipartialtgeneralized} as
\begin{align}
\complexi\lambda \partial_{t}\psi = &\complexi \lambda \psi \left({\blue{-}}\langle\nabla_{\bm{x}}R,\bm{f}+\bm{gg}^{\top}\nabla_{\bm{x}}S\rangle + \left(\nabla_{\bm{x}}R\right)^{\top}\bm{\Sigma}\nabla_{\bm{x}}R\right. \nonumber \\ &\hspace{1.5cm}\left.-\frac{1}{2}\nabla_{\bm{x}}\cdot\left(\bm{f}+\bm{gg}^{\top}\nabla_{\bm{x}}S\right) + \frac{1}{2}\Delta_{\bm{\Sigma}} R \right. \nonumber \\ & \hspace{1.5cm}\left. + \left(\frac{1}{4}-\frac{1}{2}R\right)\langle\hess_{\bm{x}},\bm{\Sigma}\rangle \!\! \right) \nonumber\\
&-\psi\!\left(\!\!-\langle\nabla_{\bm{x}}S,\bm{f}\rangle -\frac{1}{2}\langle\nabla_{\bm{x}}S,\bm{gg}^{\top}\nabla_{\bm{x}}S\rangle \right. \nonumber \\ & \left. \hspace{1.5cm} -\frac{1}{2}\langle\bm{\Sigma},\hess_{\bm{x}}S\rangle + q\!\!\right).
\label{LHSOfSchrodingerPDEgeneralized}
\end{align}
We will come back to \eqref{LHSOfSchrodingerPDEgeneralized} in a bit.

From \cite[Lemma 1, eq. (16)]{teter2025hopf},
\begin{align}
\Delta_{\bm{\Sigma}}\psi &= \psi\langle\hess_{\bm{x}},\bm{\Sigma}\rangle + \langle\bm{\Sigma},\hess_{\bm{x}}\psi\rangle 
+ 2\langle\nabla_{\bm{x}}\cdot\bm{\Sigma},\nabla_{\bm{x}}\psi\rangle.
\label{Psi_Weighted_Laplacian}
\end{align}
Taking the gradient of 
\begin{align}
    \log\psi = R + \dfrac{\complexi}{\lambda}S, 
    \label{LogPsi}
\end{align}
we get $\nabla_{\bm{x}}\psi = \psi \left( \nabla_{\bm{x}}R + \dfrac{\complexi}{\lambda}\nabla_{\bm{x}}S\right)$, and re-write \eqref{Psi_Weighted_Laplacian} as
\begin{align}
    \Delta_{\bm{\Sigma}}\psi &=\psi\langle\hess_{\bm{x}},\bm{\Sigma}\rangle + \langle\bm{\Sigma},\hess_{\bm{x}}\psi\rangle \nonumber \\  &\hspace{1cm}+ 2\psi\bigg\langle\nabla_{\bm{x}}\cdot\bm{\Sigma},\nabla_{\bm{x}}R + \dfrac{\complexi}{\lambda}\nabla_{\bm{x}}S\bigg\rangle.
\label{Weighted_LaplacianPsi}    
\end{align}
Likewise, we use \eqref{LogPsi} to express $\langle\bm{\Sigma},\hess_{\bm{x}}\psi\rangle$ in terms of $R,S$ by first writing $$\hess_{\bm{x}}\log\psi = -\dfrac{\nabla_{\bm{x}}\psi}{\psi}\left(\dfrac{\nabla_{\bm{x}}\psi}{\psi}\right)^{\top} + \dfrac{1}{\psi}\hess_{\bm{x}}\psi.$$ 
Substituting \eqref{LogPsi} in above and rearranging yields
\begin{align*}
&\hess_{\bm{x}}\psi = \psi\bigg\{\hess_{\bm{x}}R + \dfrac{\complexi}{\lambda}\hess_{\bm{x}}S + \left(\nabla_{\bm{x}}R\right)\left(\nabla_{\bm{x}}R\right)^{\top} \nonumber \\ &\hspace{2cm}- \dfrac{1}{\lambda^2}\left(\nabla_{\bm{x}}S\right)\left(\nabla_{\bm{x}}S\right)^{\top} + \dfrac{\complexi}{\lambda}\left(\nabla_{\bm{x}}R\right)\left(\nabla_{\bm{x}}S\right)^{\top} \nonumber\\
&\hspace{2cm}+ \dfrac{\complexi}{\lambda}\left(\nabla_{\bm{x}}S\right)\left(\nabla_{\bm{x}}R\right)^{\top}\bigg\},\nonumber
\end{align*}
and so
\begin{align*}
&\langle\bm{\Sigma},\hess_{\bm{x}}\psi\rangle = \psi\bigg\{\bigg\langle\bm{\Sigma},\hess_{\bm{x}}R + \dfrac{\complexi}{\lambda}\hess_{\bm{x}}S\bigg\rangle + \|\nabla_{\bm{x}}R\|_{\bm{\Sigma}}^2 \nonumber\\ & \hspace{2.5cm} - \dfrac{1}{\lambda^2}\|\nabla_{\bm{x}}S\|_{\bm{\Sigma}}^2 + \dfrac{2\complexi}{\lambda}\left(\nabla_{\bm{x}}R\right)^{\top}\bm{\Sigma}\left(\nabla_{\bm{x}}S\right)\bigg\}. \nonumber
\end{align*}
Substituting this result back into \eqref{Weighted_LaplacianPsi} and simplifying algebraically, we obtain
\begin{align}
0=&-\dfrac{\lambda^2}{2}\Delta_{\bm{\Sigma}}\psi +\psi\bigg\{\dfrac{\lambda^2}{2}\langle\hess_{\bm{x}},\bm{\Sigma}\rangle +\dfrac{\lambda^2}{2}\langle\bm{\Sigma},\hess_{\bm{x}}R\rangle \nonumber \\ &-\dfrac{1}{2}\|\nabla_{\bm{x}}S\|_{\bm{\Sigma}}^2 +\dfrac{\lambda^2}{2}\|\nabla_{\bm{x}}R\|_{\bm{\Sigma}}^2+\lambda^2\langle\nabla_{\bm{x}}\cdot\bm{\Sigma},\nabla_{\bm{x}}R\rangle\bigg\}\nonumber\\
& +\complexi\lambda\psi\bigg\{\dfrac{1}{2}\langle\bm{\Sigma},\hess_{\bm{x}}S\rangle + \left(\nabla_{\bm{x}}R\right)^{\top}\bm{\Sigma}\left(\nabla_{\bm{x}}S\right) \nonumber \\ & \hspace{1cm}+ \langle\nabla_{\bm{x}}\cdot\bm{\Sigma},\nabla_{\bm{x}}S\rangle\bigg\}.
\label{WeightedLaplacianRHSzero}    
\end{align}
Summing \eqref{WeightedLaplacianRHSzero} and \eqref{LHSOfSchrodingerPDEgeneralized}, we arrive at \eqref{WaveFunctionPDEgeneralized}, where $\Re\left(V_{\texttt{caSB}}\right)$ and $ \Im\left(V_{\texttt{caSB}}\right)$ are as in \eqref{Re_V} and \eqref{Im_V}, respectively. By specializing \eqref{psitimespsidagger} at the initial and terminal times $t_0,t_1$, we obtain the boundary conditions \eqref{WaveFunctionBCgeneralized}.
\end{proof}

\begin{remark}\label{Remark:PDEForR}
From \eqref{partiallogpsipartialtgeneralized}, we note that the dynamics of $R$ is given by
\begin{align}
&\partial_{t}R =-\langle\nabla_{\bm{x}}R,\bm{f}+\bm{gg}^{\top}\nabla_{\bm{x}}S\rangle -\frac{1}{2}\nabla_{\bm{x}}\cdot\left(\bm{f}+\bm{gg}^{\top}\nabla_{\bm{x}}S\right) \nonumber \\ 
&+ \frac{1}{2}\Delta_{\bm{\Sigma}} R + \|\nabla_{\bm{x}}R\|_{\bm{\Sigma}}^{2} + \left(\frac{1}{4}-\frac{1}{2}R\right)\langle\hess_{\bm{x}},\bm{\Sigma}\rangle.
\label{PDEforR}    
\end{align}
\end{remark}

\begin{remark}\label{Remark:RecoveringOriginal}
Using the solution of the BVP \eqref{WaveFunctionBVPgeneralized}, one can use \eqref{psitimespsidagger} to compute the optimally controlled joint PDF at any $t\in[t_0,t_1]$. Combining \eqref{OptimalControl} and \eqref{NagasawaTransform}, the optimal control $\bm{u}_{\mathrm{opt}} = -\frac{\complexi\lambda}{2}\bm{g}^{\top}\nabla_{\bm{x}}\log\dfrac{\psi}{\psi^{\dagger}}(t,\bm{x})$, which is a real function of $(t,\bm{x})$ since $\nabla_{\bm{x}}S$ is real.
\end{remark}

Next, we discuss special cases of interest that lead to considerable simplifications of \eqref{Re_V}-\eqref{Im_V}.

\subsection{The case $\bm{\Sigma}=\lambda\bm{gg}^{\top}$}\label{subsec:ProportionalNoiseInput}
A special case that arises in practice is when the control and the noise coefficients are proportional, i.e., $\bm{\Sigma}\propto\bm{gg}^{\top}$. This case occurs when the process noise models imperfect actuation, or when the noise enters through input (e.g., force, torque, current) channels.

Notice that in Sec. \ref{subsec:psiCASB}, the $\lambda>0$ was an arbitrary constant. By interpreting $\lambda$ as the proportionality constant in $\bm{\Sigma}=\lambda\bm{gg}^{\top}$, the expressions \eqref{Re_V}-\eqref{Im_V} specialize to
\begin{align}
&\Re\left(V_{\texttt{caSB}}^{\lambda}\right) =\dfrac{\lambda^2}{2}\langle\hess_{\bm{x}},\bm{\Sigma}\rangle +\dfrac{\lambda^2}{2}\langle\bm{\Sigma},\hess_{\bm{x}}R\rangle\nonumber\\
&+\dfrac{\lambda^2}{2}\|\nabla_{\bm{x}}R\|_{\bm{\Sigma}}^2+\lambda^2\langle\nabla_{\bm{x}}\cdot\bm{\Sigma},\nabla_{\bm{x}}R\rangle + \langle\nabla_{\bm{x}}S,\bm{f}\rangle \nonumber\\
&+ \left(\dfrac{1}{2\lambda}-\dfrac{1}{2}\right)\|\nabla_{\bm{x}}S\|_{\bm{\Sigma}}^2  +\frac{1}{2}\langle\bm{\Sigma},\hess_{\bm{x}}S\rangle - q, \label{Re_V_prop}\\
&\Im\left(V_{\texttt{caSB}}^{\lambda}\right) = \dfrac{\lambda-1}{2}\langle\bm{\Sigma},\hess_{\bm{x}}S\rangle + \left( \lambda - 1 \right)\langle\nabla_{\bm{x}}R,\bm{\Sigma}\nabla_{\bm{x}}S\rangle\nonumber \\
&+ \left( \lambda - \dfrac{1}{2} \right) \langle\nabla_{\bm{x}}\cdot\bm{\Sigma},\nabla_{\bm{x}}S\rangle -\lambda\langle\nabla_{\bm{x}}R,\bm{f}\rangle
-\frac{\lambda}{2}\nabla_{\bm{x}}\cdot \bm{f} \nonumber\\
&+ \frac{\lambda}{2}\Delta_{\bm{\Sigma}} R + \lambda \|\nabla_{\bm{x}}R\|_{\bm{\Sigma}}^2 + \left(\frac{\lambda}{4}-\frac{\lambda}{2}R\right)\langle\hess_{\bm{x}},\bm{\Sigma}\rangle, \label{Im_V_prop}
\end{align}
where the superscript $\lambda$ in $V_{\texttt{caSB}}^{\lambda}$ indicates the proportionality constant.

A further special case of interest is $\lambda=1$, i.e., when the input and noise channels are the same \cite{caluya2021wasserstein,chen2021stochastic}. Then, three terms drop from \eqref{Re_V_prop}-\eqref{Im_V_prop}, giving
\begin{align}
&\Re\left(V_{\texttt{caSB}}^{1}\right) =\dfrac{1}{2}\langle\hess_{\bm{x}},\bm{\Sigma}\rangle +\dfrac{1}{2}\langle\bm{\Sigma},\hess_{\bm{x}}R\rangle\nonumber+\dfrac{1}{2}\|\nabla_{\bm{x}}R\|_{\bm{\Sigma}}^2\nonumber\\
&+\langle\nabla_{\bm{x}}\cdot\bm{\Sigma},\nabla_{\bm{x}}R\rangle + \langle\nabla_{\bm{x}}S,\bm{f}\rangle +\frac{1}{2}\langle\bm{\Sigma},\hess_{\bm{x}}S\rangle - q, \label{Re_V_lambda1}\\
&\Im\left(V_{\texttt{caSB}}^{1}\right) = \dfrac{1}{2}\langle\nabla_{\bm{x}}\cdot\bm{\Sigma},\nabla_{\bm{x}}S\rangle -\langle\nabla_{\bm{x}}R,\bm{f}\rangle
-\frac{1}{2}\nabla_{\bm{x}}\cdot \bm{f} \nonumber\\
&+ \frac{1}{2}\Delta_{\bm{\Sigma}} R + \|\nabla_{\bm{x}}R\|_{\bm{\Sigma}}^2 + \left(\frac{1}{4}-\frac{1}{2}R\right)\langle\hess_{\bm{x}},\bm{\Sigma}\rangle, \label{Im_V_lambda1}
\end{align}
wherein the superscript $1$ in $V_{\texttt{caSB}}^{1}$ indicates $\lambda=1$.

In the next Section, we further specialize \eqref{Re_V_lambda1}-\eqref{Im_V_lambda1} for the \texttt{SB}, and derive \eqref{WaveFunctionPDEclassical}. We then explain how the resulting $V_{\texttt{SB}}$ generalizes the Bohm potential.


\section{Dynamics of the Wave Function for $\texttt{SB}$}\label{sec:SpecialCase}

\subsection{Reduction to standard Schr\"{o}dinger PDE}\label{subsec:ReductionStandardSchrodingerPDE}
Recall that the \texttt{SB} corresponds to the following special case of \eqref{CASBP}: $q\equiv 0$, $\bm{f}\equiv\bm{0}$, $\bm{g}=\bm{\sigma}=\bm{I}$. These choices specialize \eqref{Re_V_lambda1}-\eqref{Im_V_lambda1} to
\begin{align}
&\Re\left(V_{\texttt{SB}}\right) = \dfrac{1}{2}\Delta_{\bm{x}}R+\dfrac{1}{2}\|\nabla_{\bm{x}}R\|^2 +\frac{1}{2}\Delta_{\bm{x}}S, \label{Re_V_SB}\\
&\Im\left(V_{\texttt{SB}}\right) = \frac{1}{2}\Delta_{\bm{x}} R + \|\nabla_{\bm{x}}R\|^2.
\label{Im_V_SB}
\end{align}
Since $\Delta_{\bm{I}}\equiv\Delta_{\bm{x}}$ (the standard Laplacian), for the potential $V_{\texttt{SB}}$ given by \eqref{Re_V_SB}-\eqref{Im_V_SB}, the PDE \eqref{WaveFunctionPDEgeneralized} reduces to \eqref{WaveFunctionPDEclassical}, which is the more familiar form of the quantum mechanical Schr\"{o}dinger PDE. The boundary conditions \eqref{WaveFunctionBCgeneralized} remain unchanged.

\begin{remark}\label{Remark:RSPDEforSB}
For the \texttt{SB}, the PDEs for $R,S$, given by \eqref{PDEforR} and \eqref{DualPDE} respectively, specialize to 
\begin{subequations}
\begin{align}
& \partial_{t} R = - \langle\nabla_{\bm{x}} R, \nabla_{\bm{x}} S\rangle -\frac{1}{2}\Delta_{\bm{x}} S + \Im\left(V_{\texttt{SB}}\right), \label{BohmPDERSB}\\
& \partial_{t} S =-\frac{1}{2}\|\nabla_{\bm{x}} S\|^2-\frac{1}{2} \Delta_{\bm{x}} S.\label{BohmPDESSB}  
\end{align}
\label{BohmPDERSSB}
\end{subequations}
\end{remark}

\subsection{Connections with Bohm theory}\label{subsec:ConnectionsWithBohm}
We now discuss the connections between \eqref{BohmPDERSSB} and the Bohm's interpretation of quantum mechanics. For a particle of mass $m>0$, Bohm's interpretation \cite{bohm1952suggested,bohm1954model} of the Schr\"{o}dinger PDE 
\begin{align}
\complexi\hslash\partial_{t}\psi = -\frac{\hslash^2}{2m}\Delta_{\bm{x}}\psi + V\psi,
\label{SchrodingerPDE}    
\end{align}
where $\hslash>0$ is the reduced Planck's constant and $V$ is a suitable potential, is ``analogous to (but not identical with) classical equations of motion" \cite{bohm1952suggested}. Similar to \eqref{NagasawaTransform}, Bohm considers 
\begin{align}
\psi = R_{\mathrm{B}}\exp(\complexi S_{\mathrm{B}}/\hslash),
\label{BohmTransform}    
\end{align} 
where the subscript ${\mathrm{B}}$ indicates Bohm, and writes the PDEs \cite[eq. (3)-(4)]{bohm1952suggested}  
\begin{subequations}
\begin{align}
& \partial_{t} R_{\mathrm{B}} =-\frac{1}{2 m}R_{\mathrm{B}} \Delta_{\bm{x}} S_{\mathrm{B}} - \frac{1}{m} \langle\nabla_{\bm{x}} R_{\mathrm{B}}, \nabla_{\bm{x}} S_{\mathrm{B}}\rangle, \label{BohmPDER}\\
& \partial_{t} S_\mathrm{B} =-\frac{\|\nabla_{\bm{x}} S_{\mathrm{B}}\|^2}{2 m}-V+\frac{\hbar^2}{2 m} \frac{\Delta_{\bm{x}} R_{\mathrm{B}}}{R_{\mathrm{B}}}.\label{BohmPDES}  
\end{align}
\label{BohmPDERS}
\end{subequations}
Comparing \eqref{BohmTransform} with \eqref{NagasawaTransform}, $R=\log R_{\mathrm{B}}$, and we can rewrite \eqref{BohmPDERS} as
\begin{subequations}
\begin{align}
& \partial_{t} R =-\frac{1}{2 m}\Delta_{\bm{x}} S_{\mathrm{B}} - \frac{1}{m} \langle\nabla_{\bm{x}} R, \nabla_{\bm{x}} S_{\mathrm{B}}\rangle, \label{BohmPDERnew}\\
& \partial_{t} S_\mathrm{B} =-\frac{\|\nabla_{\bm{x}} S_{\mathrm{B}}\|^2}{2 m}-V+\frac{\hbar^2}{2 m} \Delta_{\bm{x}} R.\label{BohmPDESnew}  
\end{align}
\label{BohmPDERSnew}
\end{subequations}
Bohm interprets the term $\frac{\hbar^2}{2 m} \Delta_{\bm{x}}R$ in the HJB PDE \eqref{BohmPDESnew} as the negative of a ``quantum potential"\footnote{which later came to be known as the Bohm potential \cite{guerra1981structural}.} that acts in addition to the ``classical potential" $V$. This allows us to view \eqref{BohmPDESnew} as the deterministic HJB PDE for a particle whose motion is governed by the sum of these two potentials, akin to classical mechanics. Bohm's parenthetical remark that the potential is not identical to the classical case refers to the nonlocal nature of this interpretation, i.e., the potential itself depends on $R$, and thus on the joint state PDF.

To seek connections among our \eqref{BohmPDERSSB} and Bohm's \eqref{BohmPDERSnew}, we identify the value function $S_{\mathrm{B}}$ with $S$, the potential $V$ with $V_{\mathrm{SB}}$, and consider normalizations $\hslash=1$, $m=1$. 

We notice that if $\Im\left(V_{\texttt{SB}}\right) = 0$, then \eqref{BohmPDERSB} matches with \eqref{BohmPDERnew}.

To relate \eqref{BohmPDESSB} with \eqref{BohmPDESnew}, notice that the aforementioned identifications allow us to rewrite \eqref{BohmPDESnew} as
\begin{align*}
\partial_{t} S &=-\frac{1}{2}\|\nabla_{\bm{x}} S\|^2-\frac{1}{2} \Delta_{\bm{x}} S - \frac{1}{2}\|\nabla_{\bm{x}}R\|^2 - \complexi\Im\left(V_{\texttt{SB}}\right).
\end{align*}
In particular, if $\Im\left(V_{\texttt{SB}}\right) = 0$, then \eqref{BohmPDESnew} becomes
\begin{align}
\partial_{t} S = -\frac{1}{2}\|\nabla_{\bm{x}} S\|^2-\frac{1}{2} \Delta_{\bm{x}} S + \frac{1}{4}\Delta_{\bm{x}}R, 
\end{align}
which is indeed \eqref{BohmPDESSB} except for the extra term $\frac{1}{4}\Delta_{\bm{x}}R$, that is a scaled ``quantum mechanical" or Bohm potential.

We have thus shown that if the potential $V_{\texttt{SB}}$ were real (i.e., elastic scattering), then the ``complexified"\footnote{Here ``complexified" simply means that the transformed variable $\psi$ in \eqref{NagasawaTransform} is a complex function of $(t,\bm{x})$. This juxtaposes with the Hopf-Cole transform \cite[Sec. IV-C]{teter2025hopf}, where the transformed variables, called the \emph{Schr\"{o}dinger factors}, are real functions of $(t,\bm{x})$.} Schr\"{o}dinger bridge admits a Bohmian interpretation. This is expected because Bohm's interpretation in \cite{bohm1952suggested} tacitly considers the $V$ in \eqref{BohmPDESnew} to be real.

Our results show that transforming the conditions of optimality for \texttt{SB} to the Schr\"{o}dinger PDE \eqref{WaveFunctionPDEclassical} is possible only if we allow the potential $V_{\texttt{SB}}$ to be complex, i.e., both the real and the imaginary parts \eqref{Re_V_SB}-\eqref{Im_V_SB} are nonzero. In this sense, $V_{\texttt{SB}}$ can be seen as a generalized Bohm potential. To the best of our knowledge, the fact that a one-to-one correspondence exists between the \texttt{SB} and the Schrödinger PDE with a complex potential has not been described before.

\begin{remark}
The use of a complex potential in Schr\"{o}dinger PDE is common in nuclear physics \cite{bethe1935theory,feshbach1958unified}, where it is called the optical potential \cite{arnold1967optical,foldy1969theory,sinha1975optical}. Here ``optical" refers to a complex refractive index whose real and imaginary parts describe the transmission and absorption of light in an optical medium, respectively. Likewise, the real part of the Schr\"{o}dinger potential encodes elastic scattering (transmission of wave function), and the imaginary part encodes inelastic scattering (absorption of wave function). 
\end{remark}

\begin{remark}
To explicitly see why $\Im(V_{\mathrm{SB}})\neq 0$, notice from \eqref{Im_V_SB} that otherwise $\frac{1}{2}\Delta_{\bm{x}} R + \|\nabla_{\bm{x}}R\|^2=0$, which by \eqref{defR}, is equivalent to $\Delta_{\bm{x}}\rho^{\bm{u}}_{\mathrm{opt}}=0$. Then, \eqref{PrimalPDE} reduces to 
\begin{align}
\partial_{t}\rho^{\bm{u}}_{\mathrm{opt}} +\nabla_{\bm{x}}\cdot\left(\rho^{\bm{u}}_{\mathrm{opt}} \bm{u}_{\mathrm{opt}}\right) = 0.
\label{uoptPDEifpossible}    
\end{align}
On the other hand, from \eqref{PrimalPDE}, the optimal control $\bm{u}_{\mathrm{opt}}^{\varepsilon}$ for the \texttt{caSB} with $q\equiv 0$, $\bm{f}\equiv\bm{0}$, $\bm{g}=\bm{\sigma}=\sqrt{\varepsilon}\bm{I}$, $\varepsilon>0$, satisfies
\begin{align}
\partial_{t}\rho^{\bm{u}^{\varepsilon}}_{\mathrm{opt}} +\nabla_{\bm{x}}\cdot\left(\rho^{\bm{u}^{\varepsilon}}_{\mathrm{opt}} \bm{u}_{\mathrm{opt}}^{\varepsilon}\right) = \frac{\varepsilon}{2}\Delta_{\bm{x}}\rho^{\bm{u}^{\varepsilon}}_{\mathrm{opt}},
\label{uepsilonoptPDE}    
\end{align}
where it is known \cite{mikami2004monge,leonard2012schrodinger} that 
$$\bm{u}_{\mathrm{opt}}^{\varepsilon} \xrightarrow{\varepsilon\downarrow 0} \bm{u}^{\varepsilon=0}_{\mathrm{opt}} \neq \bm{u}^{\varepsilon=1}_{\mathrm{opt}} =: \bm{u}_{\mathrm{opt}},$$
thereby contradicting \eqref{uoptPDEifpossible}.
\end{remark}

\section{Concluding Remarks}\label{sec:conclusions}
The purpose of this work is to clarify the connections between the control affine Schr\"{o}dinger bridge and the Schr\"{o}dinger PDE. The former is a problem in (non-quantum) stochastic control about optimal steering of probability distribution that has become a topic of significant interest in control theory and generative AI. In contrast, the Schr\"{o}dinger PDE is the foundational equation in quantum mechanics.

We show that the necessary conditions of optimality for the control affine Schr\"{o}dinger bridge problem can be transformed to suitable versions of the Schr\"{o}dinger PDE boundary value problems. Our calculations reveal that the corresponding Schr\"{o}dinger PDE must have a certain structured complex potential that is a non-trivial generalization of the Bohm potential in quantum mechanics. Like the original Bohm potential, the derived generalization is nonlocal. Our main contribution is to demonstrate that the non-equilibrium statistical mechanics of the control affine Schr\"{o}dinger bridge problem induces an absorbing medium for the wave function that manifests as a nonzero imaginary part of the potential. 
While the focus of our study was theoretical, computational implication of our results, i.e., solving density steering by wave steering, remains to be explored. 
This will be pursued in our future work.


\bibliographystyle{IEEEtran}
\bibliography{References.bib}

\begin{thebibliography}{10}
\providecommand{\url}[1]{#1}
\csname url@samestyle\endcsname
\providecommand{\newblock}{\relax}
\providecommand{\bibinfo}[2]{#2}
\providecommand{\BIBentrySTDinterwordspacing}{\spaceskip=0pt\relax}
\providecommand{\BIBentryALTinterwordstretchfactor}{4}
\providecommand{\BIBentryALTinterwordspacing}{\spaceskip=\fontdimen2\font plus
\BIBentryALTinterwordstretchfactor\fontdimen3\font minus
  \fontdimen4\font\relax}
\providecommand{\BIBforeignlanguage}[2]{{%
\expandafter\ifx\csname l@#1\endcsname\relax
\typeout{** WARNING: IEEEtran.bst: No hyphenation pattern has been}%
\typeout{** loaded for the language `#1'. Using the pattern for}%
\typeout{** the default language instead.}%
\else
\language=\csname l@#1\endcsname
\fi
#2}}
\providecommand{\BIBdecl}{\relax}
\BIBdecl

\bibitem{Sch31}
E.~Schr{\"o}dinger, ``{\"U}ber die {U}mkehrung der {N}aturgesetze,''
  \emph{{S}itzungsberichte der {P}reuss {A}kad. {W}issen. {P}hys. {M}ath.
  {K}lasse, {S}onderausgabe}, vol.~IX, pp. 144--153, 1931.

\bibitem{Sch32}
------, ``Sur la th{\'e}orie relativiste de l'{\'e}lectron et
  l'interpr{\'e}tation de la m{\'e}canique quantique,'' in \emph{Annales de
  L'Institut Henri Poincar{\'e}}, vol.~2, no.~4.\hskip 1em plus 0.5em minus
  0.4em\relax Presses universitaires de France, 1932, pp. 269--310.

\bibitem{leonard2014survey}
C.~L{\'e}onard, ``A survey of the {S}chr{\"o}dinger problem and some of its
  connections with optimal transport,'' \emph{Discrete and Continuous Dynamical
  Systems-Series A}, vol.~34, no.~4, pp. 1533--1574, 2014.

\bibitem{chen2021stochastic}
Y.~Chen, T.~T. Georgiou, and M.~Pavon, ``Stochastic control liaisons: {R}ichard
  {S}inkhorn meets {G}aspard {M}onge on a {S}chr\"{o}dinger bridge,''
  \emph{Siam Review}, vol.~63, no.~2, pp. 249--313, 2021.

\bibitem{caluya2021wasserstein}
K.~F. Caluya and A.~Halder, ``Wasserstein proximal algorithms for the
  {S}chr{\"o}dinger bridge problem: Density control with nonlinear drift,''
  \emph{IEEE Transactions on Automatic Control}, vol.~67, no.~3, pp.
  1163--1178, 2021.

\bibitem{haddad2020prediction}
S.~Haddad, K.~F. Caluya, A.~Halder, and B.~Singh, ``Prediction and optimal
  feedback steering of probability density functions for safe automated
  driving,'' \emph{IEEE Control Systems Letters}, vol.~5, no.~6, pp.
  2168--2173, 2020.

\bibitem{nodozi2023neural}
I.~Nodozi, C.~Yan, M.~Khare, A.~Halder, and A.~Mesbah, ``Neural
  {S}chr{\"o}dinger bridge with {S}inkhorn losses: Application to data-driven
  minimum effort control of colloidal self-assembly,'' \emph{IEEE Transactions
  on Control Systems Technology}, vol.~32, no.~3, pp. 960--973, 2023.

\bibitem{teter2025probabilistic}
A.~M. Teter, I.~Nodozi, and A.~Halder, ``Probabilistic {L}ambert problem:
  Connections with optimal mass transport, {S}chr{\"o}dinger bridge, and
  reaction-diffusion {PDE}s,'' \emph{SIAM Journal on Applied Dynamical
  Systems}, vol.~24, no.~1, pp. 16--43, 2025.

\bibitem{de2021diffusion}
V.~De~Bortoli, J.~Thornton, J.~Heng, and A.~Doucet, ``Diffusion
  {S}chr{\"o}dinger bridge with applications to score-based generative
  modeling,'' \emph{Advances in Neural Information Processing Systems},
  vol.~34, pp. 17\,695--17\,709, 2021.

\bibitem{shi2022conditional}
Y.~Shi, V.~De~Bortoli, G.~Deligiannidis, and A.~Doucet, ``Conditional
  simulation using diffusion {S}chr{\"o}dinger bridges,'' in \emph{Uncertainty
  in Artificial Intelligence}.\hskip 1em plus 0.5em minus 0.4em\relax PMLR,
  2022, pp. 1792--1802.

\bibitem{liu2022deep}
G.-H. Liu, T.~Chen, O.~So, and E.~Theodorou, ``Deep generalized
  {S}chr{\"o}dinger bridge,'' \emph{Advances in Neural Information Processing
  Systems}, vol.~35, pp. 9374--9388, 2022.

\bibitem{liu2023i2sb}
G.-H. Liu, A.~Vahdat, D.-A. Huang, E.~A. Theodorou, W.~Nie, and A.~Anandkumar,
  ``I2sb: image-to-image {S}chr{\"o}dinger bridge,'' in \emph{Proceedings of
  the 40th International Conference on Machine Learning}, 2023, pp.
  22\,042--22\,062.

\bibitem{xie2024bridging}
W.~Xie, R.~Zhou, H.~Wang, T.~Shen, and E.~Chen, ``Bridging user dynamics:
  Transforming sequential recommendations with {S}chr{\"o}dinger bridge and
  diffusion models,'' in \emph{Proceedings of the 33rd ACM International
  Conference on Information and Knowledge Management}, 2024, pp. 2618--2628.

\bibitem{dembo2009large}
A.~Dembo and O.~Zeitouni, \emph{Large deviations techniques and
  applications}.\hskip 1em plus 0.5em minus 0.4em\relax Springer Science \&
  Business Media, 2009, vol.~38.

\bibitem{csiszar1984sanov}
I.~Csisz{\'a}r, ``Sanov property, generalized {I}-projection and a conditional
  limit theorem,'' \emph{The Annals of Probability}, pp. 768--793, 1984.

\bibitem{dawson1990schrodinger}
D.~Dawson, L.~Gorostiza, and A.~Wakolbinger, ``Schr{\"o}dinger processes and
  large deviations,'' \emph{Journal of Mathematical Physics}, vol.~31, no.~10,
  pp. 2385--2388, 1990.

\bibitem{aebi1992large}
R.~Aebi and M.~Nagasawa, ``Large deviations and the propagation of chaos for
  {S}chr{\"o}dinger processes,'' \emph{Probability Theory and Related Fields},
  vol.~94, no.~1, pp. 53--68, 1992.

\bibitem{follmer1988random}
H.~F{\"o}llmer, ``Random fields and diffusion processes,'' \emph{Lect. Notes
  Math}, vol. 1362, pp. 101--204, 1988.

\bibitem{dai1991stochastic}
P.~Dai~Pra, ``A stochastic control approach to reciprocal diffusion
  processes,'' \emph{Applied mathematics and Optimization}, vol.~23, no.~1, pp.
  313--329, 1991.

\bibitem{pavon1991free}
M.~Pavon and A.~Wakolbinger, ``On free energy, stochastic control, and
  {S}chr{\"o}dinger processes,'' in \emph{Modeling, Estimation and Control of
  Systems with Uncertainty: Proceedings of a Conference held in Sopron,
  Hungary, September 1990}.\hskip 1em plus 0.5em minus 0.4em\relax Springer,
  1991, pp. 334--348.

\bibitem{blaquiere1992controllability}
A.~Blaquiere, ``Controllability of a {F}okker-{P}lanck equation, the
  {S}chr{\"o}dinger system, and a related stochastic optimal control (revised
  version),'' \emph{Dynamics and Control}, vol.~2, no.~3, pp. 235--253, 1992.

\bibitem{teter2025hopf}
A.~Teter and A.~Halder, ``On the {H}opf-{C}ole transform for control-affine
  {S}chr{\"o}dinger bridge,'' \emph{arXiv preprint arXiv:2503.17640}, 2025.

\bibitem{leonard2011stochastic}
C.~L{\'e}onard, ``Stochastic derivatives and generalized h-transforms of
  {M}arkov processes,'' \emph{arXiv preprint arXiv:1102.3172}, 2011.

\bibitem{nagasawa1989transformations}
M.~Nagasawa, ``Transformations of diffusion and {S}chr{\"o}dinger processes,''
  \emph{Probability theory and related fields}, vol.~82, no.~1, pp. 109--136,
  1989.

\bibitem{born1926quantenmechanik}
M.~Born, ``Quantenmechanik der sto{\ss}vorg{\"a}nge,'' \emph{Zeitschrift
  f{\"u}r physik}, vol.~38, no.~11, pp. 803--827, 1926.

\bibitem{bohm1952suggested}
D.~Bohm, ``A suggested interpretation of the quantum theory in terms of"
  hidden" variables. i,'' \emph{Physical review}, vol.~85, no.~2, p. 166, 1952.

\bibitem{bohm1954model}
D.~Bohm and J.-P. Vigier, ``Model of the causal interpretation of quantum
  theory in terms of a fluid with irregular fluctuations,'' \emph{Physical
  Review}, vol.~96, no.~1, p. 208, 1954.

\bibitem{guerra1983quantization}
F.~Guerra and L.~M. Morato, ``Quantization of dynamical systems and stochastic
  control theory,'' \emph{Physical review D}, vol.~27, no.~8, p. 1774, 1983.

\bibitem{ohsumi2019interpretation}
A.~Ohsumi, ``An interpretation of the {S}chr{\"o}dinger equation in quantum
  mechanics from the control-theoretic point of view,'' \emph{Automatica},
  vol.~99, pp. 181--187, 2019.

\bibitem{nelson1966derivation}
E.~Nelson, ``Derivation of the {S}chr{\"o}dinger equation from {N}ewtonian
  mechanics,'' \emph{Physical review}, vol. 150, no.~4, p. 1079, 1966.

\bibitem{yasue1981quantum}
K.~Yasue, ``Quantum mechanics and stochastic control theory,'' \emph{Journal of
  Mathematical Physics}, vol.~22, no.~5, pp. 1010--1020, 1981.

\bibitem{rosenbrock2002doing}
H.~H. Rosenbrock, ``Doing quantum mechanics with control theory,'' \emph{IEEE
  Transactions on Automatic Control}, vol.~45, no.~1, pp. 73--77, 2002.

\bibitem{teter2024weyl}
A.~M. Teter, W.~Wang, and A.~Halder, ``Weyl calculus and exactly solvable
  {S}chr{\"o}dinger bridges with quadratic state cost,'' in \emph{2024 60th
  Annual Allerton Conference on Communication, Control, and Computing}.\hskip
  1em plus 0.5em minus 0.4em\relax IEEE, 2024, pp. 1--8.

\bibitem{pavon2002quantum}
M.~Pavon, ``Quantum {S}chr{\"o}dinger bridges,'' in \emph{Directions in
  mathematical systems theory and optimization}.\hskip 1em plus 0.5em minus
  0.4em\relax Springer, 2002, pp. 227--238.

\bibitem{pavon2010schrodinger}
M.~Pavon and F.~Ticozzi, ``{S}chr{\"o}dinger bridges for discrete-time,
  classical and quantum {M}arkovian evolutions,'' in \emph{Proceedings of the
  19th International Symposium on Mathematical Theory of Networks and
  Systems--MTNS}, vol.~5, no.~9, 2010.

\bibitem{guerra1981structural}
F.~Guerra, ``Structural aspects of stochastic mechanics and stochastic field
  theory,'' \emph{Physics Reports}, vol.~77, no.~3, pp. 263--312, 1981.

\bibitem{bethe1935theory}
H.~A. Bethe, ``Theory of disintegration of nuclei by neutrons,'' \emph{Physical
  Review}, vol.~47, no.~10, p. 747, 1935.

\bibitem{feshbach1958unified}
H.~Feshbach, ``Unified theory of nuclear reactions,'' \emph{Annals of Physics},
  vol.~5, no.~4, pp. 357--390, 1958.

\bibitem{arnold1967optical}
R.~C. Arnold, ``Optical potential for high-energy physics: theory and
  applications,'' \emph{Physical Review}, vol. 153, no.~5, p. 1523, 1967.

\bibitem{foldy1969theory}
L.~Foldy and J.~D. Walecka, ``On the theory of the optical potential,''
  \emph{Annals of Physics}, vol.~54, no.~3, pp. 447--504, 1969.

\bibitem{sinha1975optical}
B.~Sinha, ``The optical potential and nuclear structure,'' \emph{Physics
  Reports}, vol.~20, no.~1, pp. 1--57, 1975.

\bibitem{mikami2004monge}
T.~Mikami, ``Monge's problem with a quadratic cost by the zero-noise limit of
  h-path processes,'' \emph{Probability theory and related fields}, vol. 129,
  no.~2, pp. 245--260, 2004.

\bibitem{leonard2012schrodinger}
C.~L{\'e}onard, ``From the {S}chr{\"o}dinger problem to the
  {M}onge--{K}antorovich problem,'' \emph{Journal of Functional Analysis}, vol.
  262, no.~4, pp. 1879--1920, 2012.

\end{thebibliography}

\end{document}